\bmdefine{\be}{e}
\bmdefine{\bg}{g}
\bmdefine{\bk}{k}
\bmdefine{\bm}{m}
\bmdefine{\bp}{p}
\bmdefine{\bs}{s}
\bmdefine{\bt}{t}
\bmdefine{\bw}{w}
\newtheorem{thm}{THEOREM}[section]
\newtheorem{lem}[thm]{LEMMA}
\newtheorem{prop}[thm]{PROPOSITION}
\newcommand{\Romannum}[1]{\uppercase\expandafter{\romannumeral#1}}
\begin{document}
\title[Subwords in base-\(q\) expansions]{On subwords in the base-\(q\) expansion of polynomial and exponential functions}
\author{Hajime Kaneko}
\address{Institute of Mathematics, University of Tsukuba, 1-1-1, Tennodai, Tsukuba, Ibaraki, 305-8571, JAPAN; 
Center for Integrated Research in Fundamental Science and Engineering (CiRfSE), University of Tsukuba, 1-1-1, Tennodai, Tsukuba, Ibaraki 305-8571, JAPAN}
\email{kanekoha@math.tsukuba.ac.jp}
\author{Thomas Stoll}
\address{1. Universit\'e de Lorraine, Institut Elie Cartan de Lorraine, UMR 7502, Vandoeuvre-l\`es-Nancy, F-54506, France;
2. CNRS, Institut Elie Cartan de Lorraine, UMR 7502, Vandoeuvre-l\`es-Nancy, F-54506, France}
\email{thomas.stoll@univ-lorraine.fr}
\begin{abstract}
  Let $w$ be any word over the alphabet $\{0,1,\ldots, q-1\}$, and denote by $h$ either a polynomial of degree $d\geq 1$ or $h: n\mapsto m^n$ for a fixed $m$. Furthermore, denote by $e_q(w;h(n))$ the number of occurrences of $w$ as a subword in the base-$q$ expansion of $h(n)$. We show that
\[
\limsup_{n\to\infty} \frac{e_q(w;h(n))}{\log n}\geq \frac{\gamma(w)}{l\log q},
\]
where $l$ is the length of $w$ and $\gamma(w)\geq 1$ is a constant depending on a property of circular shifts of $w$. This generalizes work by the second author as well as is related to a generalization of Lagarias of a problem of Erd\H os.
\end{abstract}
\keywords{combinatorics on words; rarefied sequences; maximal order of magnitude; Hensel's lifting lemma.}
\subjclass[2010]{11A63 (primary), 11B85 (secondary)} \maketitle
\maketitle

\section{Introduction}

Let \(q\geq 2\) be an integer and \(w\) a nonempty finite word over the alphabet 
\(\mathcal{A}_q:=\{0,1,\ldots,q-1\}\). We denote by $l=l(w)$ the length of $w$ which is the number of symbols (or letters) in $w$. For any integer \(n\geq 1\), consider the finite base-$q$ expansion of $n$,
$$n=\sum_{i=0}^M n_iq^i,$$
where $M=M(n)=\lfloor \log_q n\rfloor$ denotes the position of the most significant digit. We write
\[(n)_q=n_Mn_{M-1}\cdots n_0\]
as a shorthand notation and regard this as a word over $\mathcal{A}_q$. For convenience, put \((0)_q:=0\). In this paper, we are concerned with the quantity \(e_q(w;n)\) which denotes the number of  (possibly overlapping) occurrences of the word \(w\) in the finite base-\(q\) expansion of $n$. For example, for $q=10$, $w=202$ and $n=20202$ we have $e_{10}(202;20202)=2$. In what follows, we denote by \(w^k\) the \(k\)-th concatenation power of a word \(w\);  if \(k=0\), then \(w^k\) will denote the empty word. For instance, for the word $w=20$ and $k=3$ we have $w^k=202020$.

The investigation on the number of occurrences of subwords in digital expansions along special subsequences of integers has undergone some fundamental progress in recent times. A classical point of view, dating back to the work of Gelfond~\cite{Ge67}, is to study the distribution in residue classes. The related sequences are automatic sequences such as, for example, the Thue--Morse sequence or the Rudin--Shapiro sequence. We refer the reader to~\cite{DMR17, Ha17, MR15, Mu17, St16} for an up-to-date list of the related work. 

A second and different problem is to investigate the number of occurrences of digital blocks in these rarefied sequences. We will consider this problem along polynomial and exponential subsequences in the present paper. We will show that for any fixed $w$ there are terms in these rarefied sequences whose base-$q$ expansion contains \textit{not too few} occurrences of $w$ as subwords. For that purpose we will establish lower bounds on the maximal order of magnitude of the associated counting function.

We denote the set of nonnegative integers (resp. positive integers) 
by \(\mathbb{N}\) (resp. \(\mathbb{Z}^{+}\)) and use the standard Landau resp. Vinogradov notation $f=O(g)$ resp. $f\ll g $ to indicate that $|f|\leq C |g|$ for some absolute constant $C>0$. As common, we denote a possible dependence on the parameters in the index of the symbols.

For a better understanding of the flavour of our results, let us first give two examples in the case of a polynomial rarification. 

First, consider $w'=0^l$ ($l$ fixed) which is the $l$-th concatenation power of the single letter $0$ and let \(f(X)\in \mathbb{Z}[X]\) be any arbitrary but fixed  polynomial of degree \(d\geq 1\) with \(f(\mathbb{N})\subset \mathbb{N}\). Since $e_q(w;f(n))\leq \frac{\log f(n)}{\log q}$ for sufficiently large $n$ we have
$$\limsup_{n\to\infty} \frac{e_q(w;f(n))}{\log n}\leq \frac{d}{\log q}.$$
On the other hand, by choosing a positive integer \(a\) such that the coefficients of \(f(X+a)\) are all positive, we have 
\[\limsup_{n\to\infty} \frac{e_q(w;f(n))}{\log n}
\geq \limsup_{L\to\infty} \frac{e_q(w;f(q^L+a))}{\log (q^L+a)}\geq \frac{d}{\log q}.\]
In fact, in the base-$q$ expansion of $f(q^L+a)$ the $d$ blocks of 0's between consecutive powers of $q$ are each of length $L+O_{q,f}(1)$ as $L\rightarrow \infty$. This leads to 
\begin{equation}\label{limsup1}
  \limsup_{n\to\infty} \frac{e_q(w;f(n))}{\log n}= \frac{d}{\log q}.
\end{equation}

As a second example, on the other end of the spectrum, let $w'=(q-1)^l$ be the $l$-th concatenation power of the single letter $q-1$. Theorem 1 in~\cite{St16} states that there exists $N_0(q, f, l) > 1$ such that for all $N \geq N_0(q, f, l)$ there is an $n$ with $e_q(w';f(n)) = N.$ From the method of the proof, it follows that
\begin{equation}\label{limsup2}
  \limsup_{n\to\infty} \frac{e_q(w';f(n))}{\log n}\geq \frac{1}{\log q}.
\end{equation}
In fact, in the proof the author generates \textit{one} block of consecutive $q-1$'s, hence also losing the factor $d$ with respect to the previous result.

We conjecture that~(\ref{limsup1}) holds true for any $w$, however, this seems to be a very difficult question. 

Our first result gives a result for general $w$ in the spirit of~(\ref{limsup2}) and deals with a question posed in~\cite{St16}. Denote by $\gamma'(w)$ the number of occurrences of $w$ in $w^2$ (circular shifts) and put $\gamma(w)=\gamma'(w)-1$ (for example, $\gamma(2020)=2$, $\gamma(0^l)=\gamma((q-1)^l)=l$). Note that $1\leq \gamma(w)\leq l$ for all non-empty words $w$. 

\begin{thm}
Let \(f(X)\in \mathbb{Z}[X]\) be a polynomial of degree \(d\geq 1\) with \(f(\mathbb{N})\subset \mathbb{N}\). 
Let \(w\) be a word over the alphabet \(\mathcal{A}_q\) with length \(l\geq 1\). Then 
\[
\limsup_{n\to\infty} \frac{e_q(w;f(n))}{\log n}\geq \frac{\gamma(w)}{l\log q}.
\]
\label{thm1}
\end{thm}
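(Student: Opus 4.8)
\emph{Proof plan.} The plan is to produce, for every large $k$, an integer $n_k$ whose base-$q$ expansion of $f(n_k)$ contains $w^{k}$ as a block of consecutive digits, while at the same time $\log n_k \le (lk+O_{f,q}(1))\log q$; the theorem then drops out by letting $k\to\infty$.

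First I would isolate the word–combinatorial input. If the digit word of an integer $X$ contains $w^{k}$ as a block of consecutive digits, then $e_q(w;X)\ge 1+(k-1)\gamma(w)$. Indeed, inside one copy of $w^{k}$ there are the $k$ \emph{aligned} occurrences of $w$ at positions $0,l,2l,\dots,(k-1)l$; and between any two consecutive alignment points the local pattern is a copy of $w^{2}$, which carries exactly $\gamma'(w)-2=\gamma(w)-1$ further occurrences of $w$, namely those of $w$ in $w^{2}$ that start strictly between position $0$ and position $l$. These $(k-1)(\gamma(w)-1)$ occurrences are pairwise distinct and distinct from the aligned ones, for a total of $k+(k-1)(\gamma(w)-1)=1+(k-1)\gamma(w)$. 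Granting the construction of $n_k$ below, and noting that $n_k\to\infty$ because $f(n_k)\ge q^{lk}\to\infty$ forces it, we obtain
\[
\limsup_{n\to\infty}\frac{e_q(w;f(n))}{\log n}\ \ge\ \liminf_{k\to\infty}\frac{1+(k-1)\gamma(w)}{(lk+O_{f,q}(1))\log q}\ =\ \frac{\gamma(w)}{l\log q}.
\]

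The integer $n_k$ comes from a lifting lemma: \emph{there are a constant $\Delta=\Delta(f,q)\ge 0$ and a divisor $D=D(f,q)$ of $q^{\Delta}$, together with a residue $\rho\bmod D$, such that for every $M\ge\Delta$ and every $c$ with $c\equiv\rho\pmod{D}$ the congruence $f(n)\equiv c\pmod{q^{M}}$ has a solution $n\in[0,q^{M})$.} Since $D\mid q^{\Delta}$, the condition $c\equiv\rho\pmod D$ leaves the digits of $c$ at positions $\ge\Delta$ completely free, so: taking $M=\Delta+lk+1$ and prescribing for $c$ the digit block $1\,w^{k}$ at positions $\Delta,\dots,\Delta+lk$ (the leading $1$ ensures that $f(n_k)$ really has at least $\Delta+lk$ digits, even when $w$ begins with the letter $0$), we get $n_k\in[0,q^{\Delta+lk+1})$ with $f(n_k)$ containing $w^{k}$ as a block of consecutive digits, as required.

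Finally I would prove the lifting lemma via Hensel's lemma, one prime at a time. Since $d\ge 1$, the derivative $f'$ is a nonzero polynomial, so for each prime $p\mid q$ there is $a_p\in\mathbb{Z}_p$ with $f'(a_p)\ne 0$; fix one with $\mu_p:=v_p(f'(a_p))$ as small as possible. Taylor expansion (legitimate term by term for a polynomial), using that $f^{(t)}(a_p)/t!\in\mathbb{Z}_p$, gives $f(a_p+p^{\mu_p+1}s)-f(a_p)=p^{2\mu_p+1}\bigl(u\,s+p\,\psi(s)\bigr)$ with $u=f'(a_p)/p^{\mu_p}\in\mathbb{Z}_p^{\times}$ and $\psi\in\mathbb{Z}_p[s]$; a further application of Hensel's lemma to $u\,s+p\,\psi(s)-\tau$ shows that as $s$ runs over $\mathbb{Z}_p$ this maps $a_p+p^{\mu_p+1}\mathbb{Z}_p$ \emph{onto} the ball $f(a_p)+p^{2\mu_p+1}\mathbb{Z}_p$. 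Reducing modulo $p^{e_pM}$ (where $p^{e_p}\parallel q$ and $e_pM\ge 2\mu_p+1$) and patching over all $p\mid q$ by the Chinese Remainder Theorem yields the lemma, with $D=\prod_{p\mid q}p^{2\mu_p+1}$, with $\Delta$ any integer for which $D\mid q^{\Delta}$, and with $\rho$ the residue mod $D$ determined by $f(a_p)\bmod p^{2\mu_p+1}$ for each $p$.

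The genuinely restrictive point of this approach — and the reason it yields $\gamma(w)/(l\log q)$ rather than the conjecturally optimal $d\gamma(w)/(l\log q)$ — is that it controls $f(n_k)$ only modulo a power $q^{M}$ with $M\approx\log_q n_k$, hence only a single block of about $\log_q n_k$ out of the roughly $d\log_q n_k$ digits of $f(n_k)$; producing $d$ disjoint blocks would require controlling high-order digits simultaneously, which is exactly the hard part. The bookkeeping over the finitely many primes dividing $q$ in the lifting lemma is the only other technical nuisance, but it is routine.
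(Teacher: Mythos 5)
Your proposal is correct and follows essentially the same route as the paper: Hensel's lemma at a point where $f'\neq 0$, combined with the Chinese Remainder Theorem over the prime factors of $q$, to solve $f(n)\equiv c \pmod{q^{M}}$ with $n<q^{M}$ and $c$ carrying a long block $w^{k}$ in its low digits, followed by the circular-shift count giving the factor $\gamma(w)$. The differences are only cosmetic: you place a sentinel digit $1$ above the block instead of stripping the leading zeros of $w$, and you encode the low-digit compatibility by a fixed residue $\rho \bmod D$ rather than by appending the tail $0^{c}(f(a_0))_q$ as in the paper's Lemma~\ref{lem1}.
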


Our second result concerns exponential functions. A famous (still open) problem by Erd\H os says that for all sufficiently large $n$ the ternary expansion of $2^n$ always contains the digit 2. We refer to the article of Lagarias~\cite{lag} and to~\cite{DW16} for recent and related results. Lagarias~\cite[Conjecture 1.12]{lag} generalized Erd\H os' conjecture: For all multiplicatively independent positive integers $m$ and $q$ the base-$q$ expansion of the integers $m^n$, $n=1,2\ldots$ contain any given word $w$ in its base-$q$ expansion for all sufficiently large $n\geq n_0(w)$. While Theorem~\ref{thm2} does not provide an answer to this conjecture it gives a quantitative lower bound along a subsequence of integers and therefore (up to a constant factor) the correct maximal order of magnitude.

\begin{thm}
Let $p$ be a prime number, \(m\) be a positive integer not a power of \(p\) and \(w\) a finite word over the alphabet
\(\mathcal{A}_p\) with length \(l\geq 1\). Then we have 
\[
\limsup_{n\to\infty} \frac{e_p(w;m^n)}{\log n}\geq \frac{\gamma(w)}{l\log p}.
\]
\label{thm2}
\end{thm}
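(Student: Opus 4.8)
The plan is to force the word $w^k$, for $k$ large, to occur as a factor of the base-$p$ expansion of $m^n$ for some $n$ with $n\leq Cp^{lk}$, where $C$ depends only on $m$ and $p$. Since $w^k$ by itself contains on the order of $\gamma(w)k$ occurrences of $w$, this will give $e_p(w;m^n)\geq\gamma(w)(k-1)$ while $\log n\leq lk\log p+O(1)$, and letting $k\to\infty$ produces the value $\gamma(w)/(l\log p)$ for the $\limsup$. This parallels the argument for Theorem~\ref{thm1}, with Hensel-type lifting replaced by the structure of $(\mathbb{Z}/p^N\mathbb{Z})^{\ast}$.

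\textbf{Combinatorial input.} First I would verify that for every $k\geq1$ the word $w^k$ (of length $lk$) contains exactly $(k-1)\gamma(w)+1$ occurrences of $w$: whether $w$ begins at position $i$ of $w^k$, for $0\leq i\leq l(k-1)$, depends only on $i\bmod l$, and the number of residues modulo $l$ at which an occurrence of $w$ inside $w^2$ begins equals $\gamma'(w)-1=\gamma(w)$, with residue $0$ among them; residue $0$ then accounts for $k$ admissible values of $i$ and each of the other $\gamma(w)-1$ residues for $k-1$, which gives the count. In particular, if $w^k$ occurs as a factor of the base-$p$ word of an integer $v$, then $e_p(w;v)\geq(k-1)\gamma(w)$.

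\textbf{Arithmetic input.} Write $m=p^am'$ with $\gcd(m',p)=1$; the hypothesis that $m$ is not a power of $p$ is exactly the statement $m'\geq2$, and since $(m^n)_p$ is $((m')^n)_p$ followed by $an$ zeros we have $e_p(w;m^n)\geq e_p(w;(m')^n)$, so it suffices to work with $m'$. The key claim is that \emph{there is an integer $r=r(m,p)\geq1$ such that for all $N>r$},
\[
\{(m')^n\bmod p^N:\ n\geq1\}\ \supseteq\ \{1+p^rc\bmod p^N:\ c\in\mathbb{Z}\}.
\]
To prove it, let $D=\mathrm{ord}_p(m')$ if $p$ is odd and $D=2$ if $p=2$, and write $(m')^D=1+p^rv$ with $p\nmid v$; here $r$ is a finite positive integer because $m'\geq2$ forces $(m')^D\neq1$, and $r\geq3$ when $p=2$. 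For $N>r$ the group $1+p^r\mathbb{Z}/p^N\mathbb{Z}$ is cyclic of order $p^{N-r}$, and $1+p^rv$ lies in it but not in its unique maximal subgroup $1+p^{r+1}\mathbb{Z}/p^N\mathbb{Z}$, hence generates it; thus $\langle(m')^D\bmod p^N\rangle=1+p^r\mathbb{Z}/p^N\mathbb{Z}$, which is the asserted set. The decisive feature is that $r$ is \emph{independent of $k$}: this is where primality of $p$ enters (through the cyclicity of $1+p^r\mathbb{Z}/p^N\mathbb{Z}$) and where the hypothesis $m\neq p^a$ enters (through $r<\infty$), and it is exactly why the constant in the theorem comes out sharp — any growth $r=r(N)$ would weaken it. I expect this step to be the main obstacle.

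\textbf{Assembling.} Given large $k$, set $N=r+lk$ and let $W_k$ be the integer whose base-$p$ representation is the length-$lk$ word $w^k$, so $0\leq W_k<p^{lk}$. By the arithmetic input there is an $n$ with $(m')^n\equiv1+p^rW_k\pmod{p^N}$, and the set of such $n$ is a union of residue classes modulo $\mathrm{ord}_{p^N}(m')$, which divides $Dp^{lk}$; hence one may choose $n=n(k)$ with $n(k)\leq Cp^{lk}$ and with $(m')^{n(k)}$ having more than $N$ base-$p$ digits. For such $n(k)$ the lowest $N$ digits of $(m')^{n(k)}$ spell, from the top, the word $w^k$, then $r-1$ zeros, then a $1$, and the further nonzero digit above position $N$ makes $w^k$ a genuine factor of $((m')^{n(k)})_p$, hence of $(m^{n(k)})_p$. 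The combinatorial input gives $e_p(w;m^{n(k)})\geq(k-1)\gamma(w)$, while $\log n(k)\leq lk\log p+O(1)$ and $n(k)\to\infty$, so
\[
\limsup_{n\to\infty}\frac{e_p(w;m^n)}{\log n}\ \geq\ \lim_{k\to\infty}\frac{(k-1)\gamma(w)}{lk\log p+O(1)}\ =\ \frac{\gamma(w)}{l\log p}.
\]
The remaining points are routine bookkeeping: bounding $\mathrm{ord}_{p^N}(m')$ so as to keep $n(k)$ of size $O(p^{lk})$, the minor ``no truncation'' check that the occurrence of $w^k$ is not cut off at the most significant digit, and the (essentially identical) separate handling of $p=2$.
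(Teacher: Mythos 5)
Your argument is correct, and at the level of the construction it runs parallel to the paper's: both proofs plant the block $w^k$ (the paper's $w^L$) followed by a short tail of zeros and a final digit $1$ in the low-order base-$p$ digits of a power of $m$, arrange that the exponent is at most a constant times $p^{lk}$ while the power itself exceeds $p^{N}$ so the planted block is not cut off at the top, and then count at least $\gamma(w)(k-1)$ occurrences of $w$; your combinatorial count of occurrences of $w$ in $w^k$ via residues of starting positions modulo $l$ is a correct justification of the step the paper uses implicitly. Where you genuinely differ is the key solvability lemma. You solve $(m')^{n}\equiv 1+p^{r}W_k \pmod{p^{N}}$ by the elementary structure of the unit group: $(m')^{D}=1+p^{r}v$ with $p\nmid v$ generates the cyclic group $1+p^{r}\mathbb{Z}/p^{N}\mathbb{Z}$ (your observation that $r\geq 3$ when $p=2$ takes care of the usual $p=2$ pathology), and the order of $m'$ modulo $p^{N}$ divides $Dp^{lk}$, which is exactly what bounds $n(k)$ by $Cp^{lk}$ and keeps the constant sharp. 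The paper instead works $p$-adically: it proves a generalized Hensel--Newton lemma (Proposition~\ref{pro1}) for functions that are differentiable modulo $p^{s}$ with a derivative that need not be a unit, verifies this differentiability for $g(u)=(m^{p-1})^{u}$ (Proposition~\ref{pro2}), solves $g(\xi)=\varphi_p(w^L0^c1)$ exactly in $\mathbb{Z}_p$, and then approximates $\xi$ by an integer below $2p^{L'}$ using the Lipschitz-type estimate~(\ref{eqn:2-1}). The two routes encode the same arithmetic fact, namely that $m^{p-1}$ topologically generates $1+p^{e}\mathbb{Z}_p$, of which your cyclicity statement is the finite-level version; your version is more elementary and self-contained, while the paper's version buys a reusable Hensel-type tool for non-unit derivatives (developed further in~\cite{KS17}) and keeps the proof structurally parallel to that of Theorem~\ref{thm1}. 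Your final bookkeeping (choice of $n(k)$ large enough that $(m')^{n(k)}>p^{N}$ but within one period of the order, the reduction from $m$ to $m'$, and the passage to the limit) is sound.
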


In Section~\ref{sec1} we provide a proof of Theorem~\ref{thm1} and Section~\ref{sec2} is devoted to a proof of Theorem~\ref{thm2}. Both proofs are based on Hensel's lifting lemma. For a prime number \(p\) we use \(\mathbb{Z}_p\) for the ring of \(p\)-adic integers and \(\mathbb{Q}_p\) for the field of \(p\)-adic numbers; we denote by \(v_p(u)\) the \(p\)-adic order of \(u\in\mathbb{Z}_p\). 

\section{Proof of Theorem~\ref{thm1}}\label{sec1}

In what follows, we suppose that \(w\ne 0^l\) since we have a better result by~(\ref{limsup1}) in the case of a block consisting of $0$'s only.
We choose \(a_0\) to be a nonnegative integer satisfying \(f'(a_0)\ne 0\). We write 
\[w=0^k w_{k+1}\cdots w_l,\qquad k+1\leq l\] with \(w_{k+1}\ne 0\),
where all of the $w_i$, $i=k+1,\ldots, l$ are of length 1 (letters). 
\begin{lem}
There exists a nonnegative integer \(c=c(q,f)\), depending only on \(q\) and \(f(X)\), satisfying the following: 
For any positive integer \(L\), there exists a nonnegative integer \(N=N(q,f,L)\) such that 
the base-\(q\) expansion of \(f(N)\) is of the form
\[(f(N))_q=v w_{k+1}\cdots w_l w^{L-1} 0^c (f(a_0))_q,\]
where \(v\) is a finite word over \(\mathcal{A}_q\) or the empty word. 
\label{lem1}
\end{lem}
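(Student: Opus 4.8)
The plan is to produce $N$ so that $f(N)$, read in base $q$, ends in a long run of the periodic block $w$ followed by a fixed padding $0^c(f(a_0))_q$. The natural way to force a prescribed tail in base $q$ is to work modulo a high power of $q$: if $f(N)\equiv T \pmod{q^R}$ where $T$ is the integer whose base-$q$ expansion is exactly $w_{k+1}\cdots w_l\, w^{L-1}\, 0^c\, (f(a_0))_q$ (of length $R$, say), and if moreover $f(N)\geq q^R$, then $(f(N))_q = v\, w_{k+1}\cdots w_l\, w^{L-1}\, 0^c\, (f(a_0))_q$ for some (possibly empty) prefix word $v$. So the task reduces to \emph{solving the congruence} $f(N)\equiv T \pmod{q^R}$, with $T$ depending on $L$, uniformly in $L$, and then picking among the solutions one that is large enough.

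First I would reduce to a prime power modulus. Since $q^R=\prod_p p^{Rv_p(q)}$, by CRT it suffices to solve $f(N)\equiv T \pmod{p^{Rv_p(q)}}$ for each prime $p\mid q$ separately, and here is where Hensel's lemma enters. We chose $a_0$ with $f'(a_0)\neq 0$; fix a prime $p\mid q$ and let $e_p=v_p(f'(a_0))$. The key observation is that $(f(a_0))_q$ appears as the \emph{low-order} part of $T$, i.e. $T\equiv f(a_0)\pmod{q^{s}}$ where $q^s$ is the place value at which the block $0^c(f(a_0))_q$ sits — in fact, as soon as $c$ is chosen at least $2e_p+1$ (for every $p\mid q$), we will have $v_p(T-f(a_0))$ strictly larger than $2v_p(f'(a_0))$, so $a_0$ is an approximate root of $f(X)-T$ to high enough $p$-adic precision that the strong form of Hensel's lemma applies: there is a $p$-adic integer $\alpha_p\in\mathbb{Z}_p$ with $f(\alpha_p)=T$ in $\mathbb{Z}_p$, with $\alpha_p\equiv a_0 \pmod{p^{e_p+1}}$. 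Truncating $\alpha_p$ modulo $p^{Rv_p(q)}$ gives the required solution of the congruence mod $p^{Rv_p(q)}$; this is why $c$ can be taken to depend only on $q$ and $f$ (namely $c=\max_{p\mid q}(2v_p(f'(a_0))+1)$ or similar) and not on $L$. Recombining via CRT yields an integer $N_0$, $0\le N_0 < q^R$, with $f(N_0)\equiv T\pmod{q^R}$.

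The remaining point is to upgrade $N_0$ to an $N$ with $f(N)\geq q^R$, so that the prescribed tail of length $R$ genuinely appears (with a nonempty or empty $v$ in front). Since $f$ has degree $d\geq 1$ and $f(\mathbb{N})\subset\mathbb{N}$, replacing $N_0$ by $N:=N_0+q^R t$ for a suitable positive integer $t$ leaves the residue $f(N)\equiv f(N_0)\equiv T\pmod{q^R}$ unchanged while making $f(N)$ as large as we like; choosing $t$ so that $f(N)\geq q^R$ finishes the construction. One should also double-check the top of the displayed expansion: the block $w=0^kw_{k+1}\cdots w_l$ has its leading letter $w_{k+1}\neq 0$, and $T$ was defined starting with $w_{k+1}\cdots w_l$ precisely so that no spurious cancellation of leading zeros occurs and the stated shape $(f(N))_q = v\,w_{k+1}\cdots w_l\,w^{L-1}\,0^c\,(f(a_0))_q$ holds verbatim.

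I expect the main obstacle to be the uniformity in $L$ of the Hensel step, that is, verifying that a \emph{single} constant $c$ (depending only on $q,f$) guarantees $v_p(T-f(a_0))>2v_p(f'(a_0))$ for \emph{all} $L$ simultaneously and for every prime $p\mid q$. This hinges on the fact that the only part of $T$ that sits $p$-adically close to $f(a_0)$ is the explicit low block $0^c(f(a_0))_q$, while the $w^{L-1}w_{k+1}\cdots w_l$ part contributes a term divisible by exactly $q^{|(f(a_0))_q|+c}$ but no further — so one must track $p$-adic valuations carefully and rule out accidental extra divisibility coming from the digits of $w$. The rest — CRT bookkeeping, and enlarging $N$ — is routine.
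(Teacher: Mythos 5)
Your proposal is correct and follows essentially the same route as the paper: fix the target integer $T$ (the paper's $b_{q,L}$), apply the strong form of Hensel's lemma at $a_0$ for each prime $p\mid q$ after choosing $c$ large enough that $v_p(T-f(a_0))>2v_p(f'(a_0))$, and recombine by the Chinese remainder theorem to solve $f(N)\equiv T \pmod{q^{R}}$. The only (harmless) deviation is your final enlargement $N\mapsto N_0+q^R t$, which is unnecessary --- any nonnegative solution of the congruence is automatically of the form $T+q^R m$ with $m\geq 0$, so the prescribed tail appears in any case --- and the paper in fact retains the bound $0\leq N<q^{L'}$ because it is needed later in the deduction of Theorem~\ref{thm1}.
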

\begin{proof}
Let \(q:=p_1^{e_1}\cdots p_t^{e_t}\), where \(p_1,\ldots,p_t\) are distinct prime factors of \(q\) and 
\(e_1,\ldots,e_t\) are positive integers. Let \(b_{q,L}\) be a nonnegative integer whose base-\(q\) expansion is 
denoted as 
\[(b_{q,L})_q=w_{k+1}\cdots w_l w^{L-1} 0^c (f(a_0))_q,\]
for some $c$ that we will determine later. \par
Let \(L'\) be the length of the word \(w^L 0^c (f(a_0))_q\). 
For any \(i=1,\ldots,t\), consider the \(p_i\)-adic order of an integer \(m\) by \(v_{p_i}(m)\). 
If \(c\) is sufficiently large depending only on \(q\) and \(f(X)\), then we see for any \(i=1,\ldots,t\) that 
\[v_{p_i}(f(a_0)-b_{q,L})>2v_{p_i}(f'(a_0))\]
by \(f'(a_0)\ne 0\). Putting 
\[g(X):=f(X)-b_{q,L},\]
we get 
\[v_{p_i}(g(a_0))>2v_{p_i}(f'(a_0))=2v_{p_i}(g'(a_0)).\]
By Hensel's lifting lemma~\cite{lang} there exists a \(p_i\)-adic integer \(\xi_i\in\mathbb{Z}_{p_i}\) such that 
\(f(\xi_i)=b_{q,L}\). Thus, for any \(i=1,\ldots,t\), there exists an integer \(N_i\leq p_i^{L' e_i}\) such that 
\[f(N_i)\equiv b_{q,L} \pmod {p_i^{L' e_i}}.\]
By the Chinese remainder theorem, there is an integer $N$ with 
\begin{align}
0\leq N<p_1^{L' e_1}\cdots p_t^{L' e_t}=q^{L'}
\label{eqn1}
\end{align}
and 
\[N\equiv N_i \pmod {p_i^{L' e_i}}\]
for any \(i=1,\ldots,t\). Consequently, we obtain 
\[f(N)\equiv b_{q,L} \pmod {q^{L'}},\]
which implies the lemma. 
\end{proof}
In what follows, we use the integer \(N\) constructed in the proof of Lemma~\ref{lem1} (note that $N<q^{L'}$, see~(\ref{eqn1})). For any positive integer \(L\), we see by 
Lemma~\ref{lem1} that 
\begin{align}
e_q(w;f(N))\geq \gamma(w)(L-2).
\label{eqn2}
\end{align}
By~(\ref{eqn1}) and the definition of \(L'\), we get 
\begin{align}
N<q^{L'}\leq q^{l L+c'},
\label{eqn3}
\end{align}
where \(c'=c'(q,f)\) is a constant depending only on \(q\) and \(f(X)\). 
Thus, we obtain from~(\ref{eqn2}) and~(\ref{eqn3}) that 
\[\frac{1}{l\log q}-\frac{c'}{l\log N}\leq \frac{L}{\log N}
\leq \frac{2}{\log N}+\frac{e_q(w;f(N))}{\gamma(w)\log N}.\]
Noting that \(N\) tends to infinity as \(L\) tends to infinity (by \(w\ne 0^l\)), we deduce the theorem by the 
inequality above. This concludes the proof of Theorem~\ref{thm1}.

\section{Proof of Theorem~\ref{thm2}}\label{sec2}

For the proof of Theorem~\ref{thm2}, we first introduce a generalization of Hensel's lemma and define the notation which we use throughout this section. Let \(p\) be a prime number. 
For any positive integer \(m_1\) with \(m_1\equiv 1 \ (\mathrm{mod} \ p)\), we set \(m_1=1+ap^e\), where 
\(a,e\) are positive integers with \(p \nmid  a\). 
Put \(g(u):=(1+a p^e)^u\) for any \(u\in\mathbb{Z}_p\). 
Let again \(v_p(u)\) be the \(p\)-adic order of \(u\in\mathbb{Z}_p\). 
It is known that for any \(u,u'\in\mathbb{Z}_p\) with \(v_p(u-u')\geq N\) and \(N\in\mathbb{N}\), we have
\begin{align}
v_p\big(g(u)-g(u')\big)\geq N+1
\label{eqn:2-1}
\end{align}
(see~\cite[Chapter 2, p.26]{kob}). 

Let \(F\) be a function from \(\mathbb{Z}_p\) to \(\mathbb{Z}_p\) and let \(u\in \mathbb{Z}_p\), 
\(s\in\mathbb{Z}^{+}\), and \(N\in\mathbb{N}\). We call \(F\) differentiable modulo \(p^s\) at \(u\) 
with order \(N\) if there exists \(\partial_s F(u)\in\mathbb{Q}_p\) satisfying, for any integer \(k>N\) and 
\(h\in\mathbb{Z}_p\), 
\begin{align}
F(u+p^k h)\equiv F(u)+p^k h \partial_s F(u) \pmod  {p^{k+s}}.
\label{eqn:2-2}
\end{align}
Note that if we add a constant term to \(F\), then both the differentiability of $F$ and the value \(\partial_s F(u)\) 
are not changed. \par
In the following proposition we generalize the second statement of Corollary 2.6 in~\cite{axe}. This is needed in order to consider the case where the derivative is not a $p$-adic unit. We investigated this concept for general continuous functions that are not necessarily differentiable in~\cite{KS17}.
\begin{prop}
Let \(F\) be a function from \(\mathbb{Z}_p\) to \(\mathbb{Z}_p\). Let \(j,n,s,N\) be nonnegative integers 
with \(j+N<n\) and \(j<s\) and let \(u\in\mathbb{Z}_p\). Assume that 
\begin{align}
v_p\big(F(u)\big)\geq n.
\label{eqn:2-3}
\end{align}
Moreover, suppose for any \(x\in\mathbb{Z}_p\) with \(x\equiv u \ (\mathrm{mod} \ p^{n-j})\) that 
\(F\) is differentiable modulo \(p^s\) at \(x\) with order \(N\) and that 
\begin{align}
v_p\big(\partial_s F(x)\big)=j.
\label{eqn:2-4}
\end{align}
Then there exists a \(\xi\in\mathbb{Z}_p\) satisfying 
\begin{align*}
F(\xi)=0
\end{align*}
and
\begin{align*}
\xi\equiv u \pmod  {p^{n-j}}.
\end{align*}
\label{pro1}
\end{prop}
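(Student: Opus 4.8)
The plan is to mimic the classical Newton/Hensel iteration, but carried out \emph{modulo increasing powers of $p$} rather than in the exact $p$-adic sense, since we only have approximate differentiability in the sense of~(\ref{eqn:2-2}). I would construct a sequence $u=u_0, u_1, u_2, \ldots$ of elements of $\mathbb{Z}_p$ together with an increasing sequence of exponents $n=n_0<n_1<n_2<\cdots$ such that at each stage
\[
v_p\big(F(u_r)\big)\geq n_r,\qquad u_r\equiv u\pmod{p^{n-j}},
\]
and such that $u_{r+1}\equiv u_r\pmod{p^{n_r-j}}$, so that the $u_r$ form a Cauchy sequence converging to the desired root $\xi$. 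The congruence $\xi\equiv u\pmod{p^{n-j}}$ will be preserved throughout because every increment $u_{r+1}-u_r$ is divisible by $p^{n_r-j}$ with $n_r-j\geq n_0-j=n-j$.

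The inductive step is the heart of the matter. Suppose $v_p(F(u_r))\geq n_r$ and $u_r\equiv u\pmod{p^{n-j}}$. Set $k_r:=n_r-j$; since $n_r\geq n>j+N$ we have $k_r>N$, so $F$ is differentiable modulo $p^s$ at $u_r$ with order $N$ (hypothesis applies because $u_r\equiv u\pmod{p^{n-j}}$), and by~(\ref{eqn:2-4}) we have $v_p(\partial_s F(u_r))=j$. Writing $F(u_r)=p^{n_r}t$ for some $t\in\mathbb{Z}_p$ and choosing $h\in\mathbb{Z}_p$ so that $h\,\partial_s F(u_r)\equiv -t\pmod{p^{s-j}}$ — possible since $\partial_s F(u_r)/p^{j}$ is a $p$-adic unit and $s-j>0$ — I put $u_{r+1}:=u_r+p^{k_r}h$. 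Then~(\ref{eqn:2-2}) with $k=k_r$ gives
\[
F(u_{r+1})\equiv F(u_r)+p^{k_r}h\,\partial_s F(u_r)
= p^{n_r}t + p^{k_r+j}h\,\partial_s F(u_r)
= p^{n_r}\big(t+h\,\partial_s F(u_r)/p^{j}\big)\cdot(\text{unit adjust}) \pmod{p^{k_r+s}},
\]
and by the choice of $h$ the bracket is $\equiv 0\pmod{p^{s-j}}$, hence $v_p(F(u_{r+1}))\geq n_r+(s-j)$. So I may take $n_{r+1}:=n_r+(s-j)$, which strictly exceeds $n_r$ since $j<s$; also $u_{r+1}\equiv u_r\equiv u\pmod{p^{n-j}}$. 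This closes the induction.

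Finally, since $n_r\to\infty$, the sequence $(u_r)$ is Cauchy in $\mathbb{Z}_p$ and converges to some $\xi\in\mathbb{Z}_p$ with $\xi\equiv u\pmod{p^{n-j}}$; continuity considerations give $v_p(F(\xi))\geq n_r$ for every $r$ (one checks $F(\xi)\equiv F(u_r)\pmod{p^{n_r}}$ using~(\ref{eqn:2-2}) one more time with $u=u_r$, $h=(\xi-u_r)/p^{k_r}$), hence $F(\xi)=0$, as required. The main obstacle I anticipate is purely bookkeeping: making sure that the hypotheses on differentiability and on the valuation of $\partial_s F$ genuinely apply at every $u_r$ (they do, because every $u_r$ stays in the residue class $u\bmod p^{n-j}$), and keeping careful track of the inequalities $k_r>N$ and $s-j\geq 1$ so that both~(\ref{eqn:2-2}) is usable and the valuation of $F$ strictly increases at each step. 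One should also double-check the edge case $j=0$, where $\partial_s F(u_r)$ is a unit and the argument reduces to ordinary Hensel lifting.
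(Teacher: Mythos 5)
Your proposal is correct and follows essentially the same Newton--Hensel iteration as the paper's proof; the only differences are cosmetic (you gain $s-j$ in valuation per step by solving a congruence modulo $p^{s-j}$, whereas the paper gains a single power of $p$ per step by choosing a digit $i\in\{0,\ldots,p-1\}$, and you spell out the convergence and $F(\xi)=0$ steps that the paper leaves implicit). One small slip: the defining congruence for $h$ should read $h\cdot\big(\partial_s F(u_r)/p^{j}\big)\equiv -t \pmod{p^{s-j}}$, as your own justification and the subsequent use of the bracket $t+h\,\partial_s F(u_r)/p^{j}$ make clear.
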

\begin{proof}
We construct \(\xi\in\mathbb{Z}_p\) satisfying the conditions of Proposition~\ref{pro1}, using the Newton method. 
It suffices to show that there exists a \(u_1\in\mathbb{Z}_p\) satisfying 
\begin{align}
v_p\big(F(u_1)\big)\geq n+1
\label{eqn:2-5}
\end{align}
and
\begin{align}
u_1\equiv u \pmod  {p^{n-j}}.
\label{eqn:2-6}
\end{align}
In fact, \(u_1\) will then satisfy~(\ref{eqn:2-3}), the assumption on the differentiability, and~(\ref{eqn:2-4}) with 
new nonnegative integers \(j_1=j\), \(n_1=n+1\), \(s_1=s\), and \(N_1=N\) because if \(x\in\mathbb{Z}_p\) satisfies 
\(x\equiv u_1 \ (\mathrm{mod} \ p^{n_1-j_1})\), then \(x\equiv u \ (\mathrm{mod} \ p^{n-j})\). \par
Let \(i\) be an integer with \(0\leq i\leq p-1\). Noting that \(n-j>N\) and \(n-j+s\geq n+1\), we see by~(\ref{eqn:2-2}) 
that 
\[
F\left(u+p^{n-j}\cdot i\right)
\equiv 
F(u)+p^{n-j}\cdot i \partial_s F(u) \pmod  {p^{n+1}}.
\]
Using 
\[v_p\left(p^{n-j}\partial_s F(u)\right)=n\leq v_p\big(F(u)\big),\]
we find \(i\) satisfying 
\[F(u+p^{n-j}\cdot i)\equiv 0 {\pmod  {p^{n+1}}}.\]
Putting \(u_1:=u+p^{n-j}\cdot i\), we obtain~(\ref{eqn:2-5}) and~(\ref{eqn:2-6}). 
\end{proof}
We now prove the differentiability of the function \(g(u)=(1+ap^e)^u\), where \(a\) and \(e\) are positive integers with 
\(p \nmid  a\). 
\begin{prop}
Let \(u\in\mathbb{Z}_p\). \\
\(\mathrm{1)}\) Suppose that \(e\geq 2\) or \(p\geq 3\). Then, for any \(u\in\mathbb{Z}_p\), we have that
\(g\) is differentiable modulo \(p^{e+1}\) at \(u\) with order \(0\). Moreover, 
\[\partial_{e+1} g(u)=a p^e.\]
\\
\(\mathrm{2)}\) Assume that \(e=1\) and \(p=2\). Let \(1+a' \cdot 2^t:=(1+2a)^2\), where 
\(a'\) and \(t\) are integers with \(2 \nmid  a'\) and \(t\geq 3\). Then  
\(g\) is differentiable modulo \(2^t\) at \(u\) with order \(0\). Moreover, 
\[\partial_{t} g(u)=a' 2^{t-1}.\]
\label{pro2}
\end{prop}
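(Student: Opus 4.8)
The plan is to verify the defining congruence \eqref{eqn:2-2} directly from the binomial expansion of \(g(u+p^k h)=(1+ap^e)^{u+p^k h}=g(u)\cdot(1+ap^e)^{p^k h}\). The key is to understand \((1+ap^e)^{p^k h}\) modulo a suitable power of \(p\). Since \(g\) is multiplicative in the exponent, it suffices to control the factor \((1+ap^e)^{p^k}\): if one shows
\[
(1+ap^e)^{p^k}\equiv 1+ap^{e+k}\pmod{p^{e+k+1}}
\]
for all \(k\geq 0\) (under the hypothesis \(e\geq 2\) or \(p\geq 3\) in part 1), then raising to the power \(h\in\mathbb{Z}_p\) and expanding gives \((1+ap^e)^{p^k h}\equiv 1+ap^{e+k}h\pmod{p^{e+k+1}}\), whence \(g(u+p^k h)\equiv g(u)(1+ap^{e+k}h)\equiv g(u)+p^k h\cdot ap^e g(u)\pmod{p^{k+e+1}}\). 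Because \(v_p(g(u))=0\) (it is a unit) this already looks like \eqref{eqn:2-2} with \(s=e+1\), \(N=0\), and \(\partial_{e+1}g(u)=ap^e\cdot g(u)\); the stated value \(\partial_{e+1}g(u)=ap^e\) then follows after absorbing the unit \(g(u)\) — or, more carefully, one checks that modulo \(p^{k+e+1}\) one may replace \(g(u)(1+ap^{e+k}h)\) by \(g(u)+p^{k}h\,ap^e\) using \(g(u)\equiv 1\pmod{p^e}\), so that \(ap^{e+k}hg(u)\equiv ap^{e+k}h\pmod{p^{e+k+1}}\). The inductive step for the displayed congruence on \((1+ap^e)^{p^k}\) is the standard lifting-the-exponent computation: write \((1+ap^e)^{p^{k}}=1+ap^{e+k}+(\text{binomial tail})\), and check the tail has \(p\)-adic valuation \(\geq e+k+1\); this is exactly where the hypothesis \(e\geq 2\) or \(p\geq 3\) enters, since for \(p=2,e=1\) the term \(\binom{2}{2}(ap^e)^2=a^2p^{2e}=a^2p^2\) has valuation only \(2=e+1\), not \(e+2\), so the clean formula fails at the first step.

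For part 2, namely \(p=2\), \(e=1\), the obstacle just identified forces a reparametrization: one squares once to pass from \(1+2a\) to \((1+2a)^2=1+a'2^t\) with \(t\geq 3\) and \(a'\) odd (here \(t=3\) exactly when \(a\) is odd in the appropriate sense; in general \(t=2+v_2(a(a+1))\) or similar, but the statement only needs \(t\geq 3\)). Then \(g(u)=(1+2a)^u=(1+a'2^t)^{u/2}\) morally, but since \(u\) need not be even I would instead argue as follows: for \(k\geq 1\) we have \(g(u+2^k h)=g(u)\cdot(1+2a)^{2^k h}=g(u)\cdot\big((1+a'2^t)^{2^{k-1}}\big)^{h}\), and now part-1-style reasoning applies to the base \(1+a'2^t\) with exponent-prime \(2\) but shift \(t\) (legitimate because \(t\geq 2\)), giving \((1+a'2^t)^{2^{k-1}h}\equiv 1+a'2^{t+k-1}h\pmod{2^{t+k}}\), hence \(g(u+2^k h)\equiv g(u)+2^k h\cdot a'2^{t-1}g(u)\pmod{2^{k+t}}\), which is \eqref{eqn:2-2} with \(s=t\), \(N=0\), and (after absorbing the unit \(g(u)\equiv 1\pmod 2\), using \(v_2(g(u))=0\) and \(g(u)\equiv 1 \pmod{2^{t-1}}\) — which needs a small check) \(\partial_t g(u)=a'2^{t-1}\). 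The point is that the bad first squaring step is done once and for all before the lifting argument begins.

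The hard part will be bookkeeping the valuations in the binomial tail of \((1+ap^e)^{p^k}\) and in the \(h\)-th power expansion, and in particular justifying the replacement of \(g(u)\) by \(1\) when passing from \(\partial_{e+1}g(u)=ap^eg(u)\) to the stated \(ap^e\): this requires \(ap^{e+k}h\,(g(u)-1)\equiv 0\pmod{p^{e+k+1}}\), i.e. \(v_p(g(u)-1)\geq 1\), which holds since \(g(u)=(1+ap^e)^u\equiv 1\pmod{p^e}\) and \(e\geq 1\). So in fact no unit-absorption subtlety remains — \(g(u)\equiv 1\pmod{p^e}\) with \(e\geq 1\) (resp. \(e=1\) but after squaring \(t-1\geq 2\)) is precisely enough, and I would state this observation as the first line of the proof and then run the two inductions. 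Everything else is routine \(p\)-adic estimation; the one genuine idea is recognizing that the \(p=2,e=1\) degeneracy of the naive formula is repaired by a single preliminary squaring, which is why the proposition is split into two cases with two different shifts \(e+1\) and \(t\).
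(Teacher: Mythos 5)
Your proposal is correct and follows essentially the same route as the paper: the congruence \((1+ap^e)^{p^k h}\equiv 1+ahp^{e+k}\pmod{p^{e+k+1}}\) that you establish by induction on \(k\) and then extend to general exponents is exactly the paper's Lemma~\ref{lem2-1}, and the subsequent multiplication by \(g(u)\) with absorption of the unit via \(g(u)\equiv 1\pmod p\), as well as the single preliminary squaring \((1+2a)^2=1+a'2^t\) to handle \(p=2\), \(e=1\), are precisely the paper's argument. The only cosmetic difference is that the paper passes from integer to \(p\)-adic exponents \(h\) by density of \(\mathbb{N}\) in \(\mathbb{Z}_p\), a step you gloss over but which is routine.
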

For the proof of Proposition~\ref{pro2}, we need the following auxiliary result. 
\begin{lem}
Assume that \(e\geq 2\) or \(p\geq 3\). 
Let \(k\) be a nonnegative integer and \(h\in\mathbb{Z}_p\). Then we have 
\begin{align}
(1+ap^e)^{hp^k}\equiv 1+ah p^{k+e} \pmod  {p^{k+e+1}}.
\label{eqn:2-7}
\end{align}
\label{lem2-1}
\end{lem}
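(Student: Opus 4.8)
The plan is to prove~(\ref{eqn:2-7}) by induction on $k$, the exponent of $p$ in the superscript $hp^k$. For the base case $k=0$ we must show $(1+ap^e)^h\equiv 1+ahp^e\pmod{p^{e+1}}$ for all $h\in\mathbb{Z}_p$. First I would treat $h$ as a $p$-adic integer and use continuity: the map $h\mapsto(1+ap^e)^h$ is continuous on $\mathbb{Z}_p$, so it suffices to verify the congruence for $h\in\mathbb{N}$ and then pass to the limit (both sides being continuous in $h$, and congruence modulo $p^{e+1}$ being a closed condition). For $h\in\mathbb{N}$ one expands by the binomial theorem: $(1+ap^e)^h=1+hap^e+\sum_{i\ge 2}\binom{h}{i}a^ip^{ie}$, and the tail terms have $p$-adic valuation at least $2e\ge e+1$ (here one uses $e\ge 1$, which is part of the standing hypothesis since $e$ is a positive integer), so the base case follows.

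For the inductive step, assume~(\ref{eqn:2-7}) holds for some $k\ge 0$ and all $h\in\mathbb{Z}_p$; I want it for $k+1$. Write $(1+ap^e)^{hp^{k+1}}=\big((1+ap^e)^{hp^k}\big)^p$. By the inductive hypothesis, $(1+ap^e)^{hp^k}=1+ahp^{k+e}+p^{k+e+1}r$ for some $r\in\mathbb{Z}_p$, i.e. it equals $1+bp^{k+e}$ with $b\equiv ah\pmod p$. Raising to the $p$-th power via the binomial theorem gives
\[
(1+bp^{k+e})^p=1+bp^{k+e+1}+\sum_{i\ge 2}\binom{p}{i}b^ip^{i(k+e)}.
\]
The linear term is $bp^{k+e+1}\equiv ahp^{k+e+1}\pmod{p^{k+e+2}}$ since $b\equiv ah\pmod p$. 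It remains to check that every term with $i\ge 2$ lies in $p^{k+e+2}\mathbb{Z}_p$: for $2\le i\le p-1$ one has $v_p\big(\binom{p}{i}\big)=1$, so the valuation is at least $1+i(k+e)\ge 1+2(k+e)\ge k+e+2$ (using $k+e\ge 1$); for $i=p$ the term is $b^pp^{p(k+e)}$ with valuation $\ge p(k+e)\ge 2(k+e)\ge k+e+2$. This is precisely where the hypothesis ``$e\ge 2$ or $p\ge 3$'' is needed: the only potentially dangerous case is $i=2$, where the bound $1+2(k+e)\ge k+e+2$ requires $k+e\ge 1$, which always holds, but a closer look at the case $p=2$, $e=1$, $k=0$ shows the estimate $v_2\big(\binom{2}{2}b^2p^2\big)=v_2(b^2p^2)\ge 2$ only gives $\ge k+e+1=2$, not $k+e+2=3$ — so the congruence genuinely degrades there, which is exactly the phenomenon isolated in part~2) of Proposition~\ref{pro2}.

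The main obstacle is thus the careful bookkeeping of $p$-adic valuations of the binomial coefficients $\binom{p}{i}$ in the inductive step, and in particular confirming that the excluded case $(p,e)=(2,1)$ is the unique obstruction — every other combination of $i\ge 2$ and $k+e\ge 1$ comfortably clears the required threshold $k+e+2$. Once that is checked, the induction closes immediately and~(\ref{eqn:2-7}) follows for all $k\ge 0$ and all $h\in\mathbb{Z}_p$.
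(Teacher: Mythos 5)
Your proposal is in substance the paper's own proof: the paper also establishes~(\ref{eqn:2-7}) by induction on the exponent of $p$, raising to the $p$-th power and estimating the binomial terms, the only organizational difference being that the paper first reduces to $h=1$ (and $p\nmid h$) and recovers general $h$ at the end by one further binomial expansion, whereas you carry a general $h\in\mathbb{Z}_p$ through the induction after invoking density only in the base case. Both variants work and rest on the same valuation estimate.

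One step, however, is not justified as written. In your inductive step, for the top term $i=p$ you bound the valuation by $p(k+e)\ge 2(k+e)\ge k+e+2$; the last inequality requires $k+e\ge 2$ and fails in the sub-case $e=1$, $k=0$ (with $p\ge 3$), which genuinely occurs in the very first step of the induction. The inequality you actually need, $p(k+e)\ge k+e+2$, i.e.\ $(p-1)(k+e)\ge 2$, does hold under the standing hypothesis --- either $e\ge 2$ gives $k+e\ge 2$, or $p\ge 3$ gives $p-1\ge 2$ --- so the repair is one line. Note also that it is precisely this top term $i=p$ (where $\binom{p}{p}=1$ contributes no extra factor of $p$) at which the hypothesis ``$e\ge 2$ or $p\ge 3$'' enters, not the indices $2\le i\le p-1$ you single out, which are harmless for every $k\ge 0$; for $p=2$ the two coincide, which is why your discussion of the excluded case $(p,e)=(2,1)$ is nonetheless accurate and consistent with the paper's condition $e+k<p(e+k-1)$ at the corresponding point.
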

\begin{proof}
We may assume that \(h\) is a nonnegative integer because 
\(\mathbb{N}\) is dense in \(\mathbb{Z}_p\). 
Moreover, it suffices to show~(\ref{eqn:2-7}) in the case where  \(h\) is not divisible by \(p\). 
In fact, assume that~(\ref{eqn:2-7}) holds for any \(h\in\mathbb{N}\) not divisible by \(p\). 
Then, for any nonnegative integer \(h=h' p^s\), where \(s=v_p(h)\geq 1\), we see 
\begin{align*}
(1+ap^e)^{hp^k}=(1+ap^e)^{h'p^{k+s}}\equiv 1\equiv 1+ah p^{k+e} \pmod  {p^{k+e+1}}, 
\end{align*}
which implies~(\ref{eqn:2-7}). \par
First, we show~(\ref{eqn:2-7}) in in the case of \(h=1\), namely, 
\begin{align}
(1+ap^e)^{p^k}\equiv 1+a p^{k+e} \pmod  {p^{k+e+1}}.
\label{eqn:2-8}
\end{align}
If \(k=0\), then~(\ref{eqn:2-8}) is trivial. If \(k\geq 1\), then the inductive hypothesis implies that 
\[
(1+ap^e)^{p^{k-1}}=1+ap^{e+k-1}+c p^{e+k}
\]
for some integer \(c\), and so 
\begin{align*}
(1+ap^e)^{p^k}=(1+ap^{e+k-1}+c p^{e+k})^p \equiv (1+ap^{e+k-1})^p \pmod  {p^{k+e+1}}.
\end{align*}
Since 
\[
(1+ap^{e+k-1})^p=1+a p^{e+k}+\sum_{j=2}^{p}\binom{p}{j} (ap^{e+k-1})^j,
\]
we deduce~(\ref{eqn:2-8}), using 
\[e+k<p(e+k-1)\]
by \(k\geq 1\), and \(e\geq 2\) or \(p\geq 3\). \par
Finally, if \(h\geq 0\) is a general integer not divisible by \(p\), then~(\ref{eqn:2-7}) follows from~(\ref{eqn:2-8}) by considering the binomial expansion of 
\((1+a p^{k+e})^h\).
\end{proof}
\begin{proof}[Proof of Proposition~\ref{pro2}]
Let \(k\) be any positive integer and \(u,h\in \mathbb{Z}_p\). First, we assume that 
\(e\geq 2\) or \(p\geq 3\). Using Lemma~\ref{lem2-1}, we get 
\begin{align*}
g(u+hp^k)&=g(u)(1+a p^e)^{hp^k}\\
&\equiv g(u)(1+ah p^{e+k}) \pmod  {p^{k+e+1}}\\
&\equiv g(u)+hp^k\cdot a p^e \pmod  {p^{k+e+1}}
\end{align*}
by \(g(u)\equiv 1 \ (\mathrm{mod} \ p)\), which implies the first statement. \par
Next, suppose that \(e=1\) and \(p=2\). In the same way as above, using Lemma~\ref{lem2-1} again, 
we see by \(k-1\geq 0\) that 
\begin{align*}
g(u+2^k\cdot h)&=g(u) (1+a' \cdot 2^t)^{h\cdot 2^{k-1}}\\
&\equiv g(u)+(h\cdot 2^k)\cdot (a'\cdot 2^{t-1}) \pmod  {2^{k+t}},
\end{align*}
which implies the second statement. 
\end{proof}

We are now ready to give a proof of Theorem~\ref{thm2}.

We may assume that \(m\) and \(p\) are coprime. In fact, if \(m\) is not coprime to \(p\), then putting 
\(m=:m' p^s\), where \(s=v_p(m)\) and \(m'\geq 2\), we have 
\[
e_p(w;m^n)\geq e_p(w;m'^n).
\]
Put \(m^{p-1}=:1+ap^e\) and \(g(u):=(1+ap^e)^u\), 
where \(a\) and \(e\) are positive integers with \(p \nmid  a\) and \(u\in\mathbb{Z}_p\). 
If \(p=2\) and \(e=1\), then we define \(a'\) and \(t\) as in Proposition~\ref{pro2}. \par
For any finite word \(v=v_{d-1}v_{d-2}\cdots v_0\) on the alphabet \(\mathcal{A}_p\), we put 
\[\varphi_p(v):=\sum_{i=0}^{d-1} v_i p^i.\]
Moreover, for any positive integer \(L\), let 
\[\varphi_p(w^L 0^c 1)=:b_{p,L},\]
for some $c$ that we will determine later. \par
Put \(F(u):=g(u)-b_{p,L}\) for \(u\in\mathbb{Z}_p\). We apply Proposition~\ref{pro1} with \(u=0,N=0,\)
\begin{align*}
j=\begin{cases}
e & \mbox{if }e\geq 2\mbox{ or }p\geq 3,\\
t-1 & \mbox{if }e=1\mbox{ and }p=2,
\end{cases}
\end{align*}
\(s=j+1\), \(n=j+1\) and put \(c:=n-1\). Then we see 
\[v_p \big(F(0)\big)=v_p(1-b_{p,L})\geq n,\]
which implies~(\ref{eqn:2-3}). 
Moreover, the assumption on the differentiability and~(\ref{eqn:2-4}) in Proposition~\ref{pro1} 
are satisfied by Proposition~\ref{pro2}. \par
Thus, Proposition~\ref{pro1} implies that there exists \(\xi\in \mathbb{Z}_p\) satisfying \(g(\xi)=b_{p,L}\). 
Let \(L'\) be the length of the word \(w^L 0^c1\). Then we have 
\[L'= l L+c+1.\]
Let \(N\) be an integer with 
\[p^{L'}\leq N<2 p^{L'}\]
and 
\[N\equiv \xi \pmod {p^{L'}}.\]
Using~(\ref{eqn:2-1}), we get 
\begin{align}
m^{(p-1)N}=g(N)\equiv g(\xi)=b_{p,L} \pmod {p^{L'}}
\label{eqn:2-9}
\end{align}
Putting \(N'=(p-1)N\), we obtain by~(\ref{eqn:2-9}) and \(m^{N'}>p^{L'}\) that 
\begin{align}
e_p(w;m^{N'})\geq \gamma(w) (L-1) 
\label{eqn:2-10}
\end{align}
and that 
\begin{align}
\log N'&\leq \log \big(2(p-1)\big)+L'\log p\nonumber\\
&= \log \big(2(p-1)\big)+(c+1)\log p+lL\log p.
\label{eqn:2-11}
\end{align}
Combining~(\ref{eqn:2-10}) and~(\ref{eqn:2-11}), we deduce Theorem~\ref{thm2} by letting \(L\) tend to infinity. 

\section*{Acknowledgements}

The first author is supported by JSPS KAKENHI Grant Number 15K17505. The second author acknowledges the support of the bilateral project ANR-FWF (France-Austria) called MUDERA (Multiplicativity, Determinism, and Randomness), ANR-14-CE34-0009.

\end{document}